\def\n{{\mathbf {n}}}
\def\A{\mathcal A}
\def\R{\mathbb R}
\newcommand{\dive}{\hbox{div}}
\newcommand{\N}{\mathbb{N}}
\newcommand{\sob}{W^{1,p}(\Omega)}
\newtheorem{thm}{Theorem}[section]
\newtheorem{lem}[thm]{Lemma}
\newtheorem{rem}[thm]{Remark}
\newtheorem{prop}[thm]{Proposition}
\numberwithin{equation}{section}
\begin{document}
\title[An optimization problem]{An optimization problem for nonlinear Steklov eigenvalues with a boundary potential}
\author[J. Fern\'andez Bonder]{Juli\'an Fern\'andez Bonder}

\address[J. Fern\'andez Bonder]{IMAS - CONICET and Departamento de Matem\'atica, FCEyN - Universidad de Buenos Aires, Ciudad Universitaria, Pabell\'on I  (1428) Buenos Aires, Argentina.}

 \email{jfbonder@dm.uba.ar}

\urladdr{http://mate.dm.uba.ar/~jfbonder}

\author[G.O. Giubergia]{Graciela O. Giubergia}
\address[G.O. Giubergia]{Departamento de Matematicas, Universidad Nacional de Rio Cuarto, 5800 Rio Cuarto, Argentina.}

\email{ggiubergia@exa.unrc.edu.ar}

\author[F.D. Mazzone]{ Fernando D. Mazzone }

\address[F.D. Mazzone]{Departamento de Matematicas, Universidad Nacional de Rio Cuarto, 5800 Rio Cuarto, Argentina.}

\email{fmazzone@exa.unrc.edu.ar}

\subjclass[2010]{35J70} 

\keywords{Eigenvalue optimization, Steklov eigenvaules, $p-$Laplacian}

 \maketitle

\begin{abstract}
In this paper, we analyze an optimization problem for the first (nonlinear) Steklov eigenvalue plus a boundary potential with respect to the potential function which is assumed to be uniformly bounded and with fixed $L^1$-norm.
\end{abstract}

\setlength{\extrarowheight}{-5mm}

\section{Introduction }
In recent years a great deal of attention has been putted in optimal design problems for eigenvalues (both linear and nonlinear) due to many interesting applications. For a comprehensive description of the current developments in the field in the case of linear eigenvalues and very interesting open problems, we refer to \cite{Henrot-Pierre}. In the nonlinear setting, we refer to the recent research papers \cite{Cuccu1, Cuccu3, Cuccu2, DPFBR, DP, FBRW} and references therein.

To be precise, the eigenvalue problem that we are interested in is the following
\begin{equation}\label{pde}
\begin{cases}
-\Delta_p u+|u|^{p-2}u=0 &\text{in }\Omega,  \\
|\nabla u|^{p-2}\frac{\partial u}{\partial \n}+\sigma \phi |u|^{p-2}u=\lambda |u|^{p-2}u & \text{in } \partial\Omega.
\end{cases}
\end{equation}
Here $\Omega\subset\R^n$ is a bounded smooth domain, $\Delta_p u$ is the usual $p$-Laplace operator defined as $\Delta_p u = \dive\big( |\nabla u|^{p-2}\nabla u\big)$, $\n$ denotes the outer unit normal vector to $\partial\Omega$, $\phi\in L^\infty(\partial\Omega)$ is a nonnegative boundary potential and $\sigma>0$ is a real parameter.

Under these hypotheses, the functional associated to \eqref{pde} is trivially coercive, that is
\[ I(u,\phi)=\int_{\Omega}|\nabla u|^p+|u|^pdx+\sigma\int_{\partial \Omega}\phi|u|^pd\mathcal{H}^{n-1}\ge \|u\|_{\sob}^p.\]

This functional is associated to \eqref{pde} in the sense that eigenvalues $\lambda$ of \eqref{pde} are critical values of $I$ restricted to the manifold $\|u\|_{L^p(\partial\Omega)}=1$. See \cite{FBR}.

In particular, It is easy to see that the minimum value of $I$
\begin{equation}\label{var prob}
    \lambda(\sigma,\phi)=\inf \big\{I(u,\phi):u\in W^{1,p}(\Omega),\|u\|_{L^p(\partial\Omega)}=1\big\}
\end{equation}
is the first (lowest) eigenvalue of \eqref{pde}. Therefore, the existence of the first eigenvalue and the corresponding eigenfunction $u$ follows from the compact embedding $W^{1,p}(\Omega)\subset L^{p}(\partial \Omega)$. 

In this work, we are interested in the minimization problem for $\lambda(\sigma, \phi)$ with respect to different configurations for the boundary potential $\phi$. That is, given certain class of admissible potentials $\mathcal A$, we look for the minimum possible value of $\lambda(\sigma, \phi)$ when $\phi\in \mathcal A$.

This study complements the ones started in \cite{DPFBR}. In that paper, the authors analyzed the Steklov problem but with an interior potential and show the connections of that problem with the one considered in \cite{FBRW}.

In this opportunity, we consider the class of uniformly bounded potentials, i.e.
$$
\mathcal A = \{\phi\in L^{\infty}(\partial\Omega)\colon 0\le \phi\le1\}.
$$
Observe that $\mathcal A$ is the closure of the characteristic functions in the weak* topology.

Clearly, the minimization problem in the whole class $\mathcal A$ has no sense since the infimum is realized with $\phi\equiv 0$. The relevant problem here is to consider the minimization among those potentials in $\mathcal A$ that has fixed $L^1-$norm. That is
\begin{equation}\label{optimo} 
\Lambda (\sigma, a)= \inf \bigg\{\lambda(\sigma,\phi)\colon \phi\in \mathcal A, \int_{\partial \Omega} \phi\, d\mathcal{H}^{n-1}=a \bigg\}\end{equation}

The first result in this paper is the existence of an optimal potential for $\Lambda(\sigma, a)$ and, moreover, it is shown that this optimal potential can be taken as the characteristic function a sub-level set $D_\sigma$ of the corresponding eigenfunction. See \cite{Chanillo1, Chanillo2} for related results.

As another application we investigate the connection with the optimization problem considered in \cite{DPFBN}. That is,  given $E \subset \partial\Omega$, consider the equation
\begin{equation}\label{agujeroe}
    \left\{%
\begin{array}{rl}
    -\Delta_p u+|u|^{p-2}u&=0\quad\hbox{in }\Omega  \\
    u&=0\quad\hbox {in } E\\
    |\nabla u|^{p-2}\frac{\partial u}{\partial \nu}&=\lambda |u|^{p-2}u \quad\hbox{in } \partial\Omega\setminus E\\
\end{array}%
\right.
\end{equation}
whose  first eigenvalue is given by
\begin{equation}\label{agujero}
    \lambda(\infty ,E):=\inf\big\{ \|u\|^p_{W^{1,p}(\Omega)}\colon  \|u\|_{L^p(\partial\Omega)}=1, u=0, \mathcal{H}^{n-1} \hbox{ a.e. in }
    E\big\},
\end{equation}

Associated to \eqref{agujero} we have  the optimal configuration problem
\begin{equation}\label{agujerooptimo}
    \Lambda(\infty,a)=\inf\big\{ \lambda(\infty,E): \mathcal{H}^{n-1}(E)=a\}.
\end{equation}

Our second result shows that $\Lambda(\sigma, a)\to \Lambda(\infty, a)$ as $\sigma\to\infty$ and, moreover, the optimal configuration $\phi_\sigma=\chi_{D_\sigma}$ of $\Lambda(\sigma, a)$ converges (in the topology of $L^1-$convergence of the characteristic functions) to an optimal configuration of the limit problem $\Lambda(\infty, a)$.

The remaining of the paper is devoted to analyze qualitative properties of optimal configurations for $\Lambda(\sigma, a)$.

First, we consider the spherical symmetric case, that is when $\Omega$ is a ball, and in this simple case by means of symmetrization arguments we can give a full description of the optimal configurations.

Finally, we address the general problem and study the behavior of $\lambda(\sigma, \chi_D)$ for regular deformations of the set $D$. We employ the so--called method of Hadamard and prove differentiability of $\lambda(\sigma, \chi_D)$ with respect to regular deformations and provide a simple formula for the derivative of the eigenvalue. The main novelty of this formula is that it involves a $(n-2)-$dimensional integral along the boundary of $D$ relative to $\partial\Omega$. Up to our knowledge, this is the first time that this type of lower-dimensional integrals were observed in this type of computations.

We want to remark that the results in this work are new even in the linear setting, $p=2$.

\section{Preliminary remarks}

A simple modification of the arguments in \cite{FBR2} shows that, given $\phi\in\A$ and $\sigma>0$, the first eigenvalue $\lambda(\sigma,\phi)$ is simple. i.e. any two eigenfunctions are multiple of each other. Therefore, there exists a unique nonnegative, normalized eigenfunction $u$ (normalized means that $\|u\|_{L^p(\partial\Omega)}=1$).

The purpose of this very short section is to recall some regularity properties of this eigenfunction.

First, we note that by \cite{Serrin}, there exists $\alpha>0$ such that $u \in C^{1,\alpha}_{loc}(\Omega)$.  Now, by an usual argument, we have that $|u|$ is an eigenfunction associated to $\lambda(\sigma,\phi)$. Hence, the Harnack inequallity, c.f. \cite{Serrin}, implies that any first eigenfunction $u$ has constant sign and, moreover, that $u>0$ in $\Omega$. 

Next, by the results of \cite{Lieberman}, an eigenfunction of \eqref{pde} is continuous up to the boundary. In fact, $u\in C^{\beta}(\bar \Omega)$ for some $\beta>0$.

Summing up, we have
\begin{prop}\label{regularidad}
Given $\phi\in\A$ and $\sigma>0$, there exists a unique nonnegative eigenfunction $u\in W^{1,p}(\Omega)$ of \eqref{pde} associated to $\lambda(\sigma, \phi)$. Moreover, this eigenfunction $u$ verifies that $u\in C^{1,\alpha}_{loc}(\Omega)\cap C^{\beta}(\bar\Omega)$ for some $\alpha, \beta>0$. Finally, $u>0$ in $\Omega$.
\end{prop}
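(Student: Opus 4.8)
The plan is to obtain $u$ as a minimizer of the constrained variational problem \eqref{var prob} and then to read off its qualitative properties from the quoted regularity theory. First I would run the direct method: the admissible set $\{u\in\sob : \|u\|_{L^p(\partial\Omega)}=1\}$ is nonempty, and since $I(\cdot,\phi)\ge \|\cdot\|_{\sob}^p$ is coercive, any minimizing sequence $(u_k)$ is bounded in $\sob$. Passing to a subsequence, $u_k \rightharpoonup u$ weakly in $\sob$; the term $\|u\|_{\sob}^p$ is weakly lower semicontinuous, while the boundary term $\sigma\int_{\partial\Omega}\phi|u|^p\,d\mathcal{H}^{n-1}$ is weakly continuous because the trace embedding $\sob\hookrightarrow L^p(\partial\Omega)$ is compact (so $u_k\to u$ strongly in $L^p(\partial\Omega)$ and $\phi$ is bounded). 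Hence $I(u,\phi)\le \liminf_k I(u_k,\phi)=\lambda(\sigma,\phi)$, and the same compactness preserves the constraint $\|u\|_{L^p(\partial\Omega)}=1$ in the limit, so $u$ is a genuine minimizer and therefore a weak solution of \eqref{pde} with $\lambda=\lambda(\sigma,\phi)$, obtained as the Euler--Lagrange equation of the constrained problem, cf. \cite{FBR}.

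To secure a \emph{nonnegative} eigenfunction I would replace $u$ by $|u|$: since $|\nabla |u||=|\nabla u|$ a.e. and $\||u|\|_{L^p(\partial\Omega)}=\|u\|_{L^p(\partial\Omega)}=1$, one has $I(|u|,\phi)=I(u,\phi)=\lambda(\sigma,\phi)$, so $|u|$ is again a minimizer and hence a nonnegative eigenfunction.

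For regularity and positivity I would invoke the cited results. The equation $-\Delta_p u+|u|^{p-2}u=0$ has a locally bounded right-hand side, so Serrin's interior theory \cite{Serrin} gives $u\in C^{1,\alpha}_{loc}(\Omega)$; the Harnack inequality of \cite{Serrin}, applied to the nonnegative solution $u$, forces it to be either identically zero—impossible since $\|u\|_{L^p(\partial\Omega)}=1$—or strictly positive, yielding $u>0$ in $\Omega$. Boundary regularity $u\in C^{\beta}(\bar\Omega)$ then follows from the up-to-the-boundary Hölder estimates of Lieberman \cite{Lieberman} for quasilinear equations with this type of nonlinear (Robin-like) boundary condition. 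Uniqueness of the normalized nonnegative eigenfunction is precisely the simplicity of $\lambda(\sigma,\phi)$, which I would establish by adapting the argument of \cite{FBR2}.

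The main obstacle is the simplicity, and hence uniqueness, of the first eigenvalue: unlike existence and regularity, which are standard direct-method and citation arguments, simplicity for the $p$-Laplacian is genuinely nonlinear and typically rests on a hidden-convexity or Picone-type inequality comparing two normalized positive eigenfunctions. I would therefore isolate this as the key step, carrying out the modification of \cite{FBR2} carefully and checking in particular that the boundary potential term $\sigma\int_{\partial\Omega}\phi|u|^p\,d\mathcal{H}^{n-1}$, being nonnegative and $p$-homogeneous in $u$, does not disturb the convexity structure along the ray $t\mapsto (t\,u_1^p+(1-t)\,u_2^p)^{1/p}$ that drives the uniqueness proof.
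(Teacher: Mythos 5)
Your proposal is correct and follows essentially the same route as the paper: existence by the direct method using the compact trace embedding, nonnegativity by passing to $|u|$, interior $C^{1,\alpha}_{loc}$ regularity and positivity via Serrin's estimates and Harnack inequality \cite{Serrin}, global H\"older continuity via Lieberman \cite{Lieberman}, and uniqueness reduced to the simplicity of the first eigenvalue by adapting \cite{FBR2}. The paper's own argument is just this chain of citations stated more tersely, so your more detailed write-up (including the observation that the $p$-homogeneous, nonnegative boundary term does not disturb the hidden-convexity argument for simplicity) is a faithful expansion of it rather than a different proof.
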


\section{Existence of optimal configurations}

In this section we first establish the existence of optimal configurations for $\Lambda(\sigma, a)$. Then we analyze the limit $\sigma\to\infty$ and show the convergence to the problem $\Lambda(\infty, a)$.

Let us begin with the existence result.

\begin{thm}
For any $\sigma >0$ and $0 \leq a \leq \mathcal{H}^{n-1}(\partial \Omega)$ there exist an optimal pair $(u, \phi)\in \sob\times \mathcal A$, which has the following
properties
\begin{enumerate}
\item $u \in C^{1,\alpha}_{loc} (\Omega)\cap C(\bar \Omega)$
\item $\phi=\chi_{D}$ where, for some $s$, $\{u < s\} \subset D \subset \{u \leq s\}$, $\mathcal{H}^{n-1}(D)=a$
\end{enumerate}
\end{thm}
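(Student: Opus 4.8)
The plan is to run the direct method on the pair $(u,\phi)$, exploiting that $I(u,\phi)$ is \emph{linear} in $\phi$, so that once $u$ is frozen the optimal potential is pinned down by a rearrangement (bathtub) argument. To begin I would reformulate
\[
\Lambda(\sigma,a)=\inf\big\{I(u,\phi)\colon u\in\sob,\ \|u\|_{L^p(\partial\Omega)}=1,\ \phi\in\A,\ \textstyle\int_{\partial\Omega}\phi\,d\mathcal H^{n-1}=a\big\},
\]
which coincides with \eqref{optimo}, since $\lambda(\sigma,\phi)=\inf_v I(v,\phi)\le I(u,\phi)$, with equality attained at the eigenfunction furnished by Proposition \ref{regularidad}.

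Take a minimizing sequence $(u_k,\phi_k)$. The coercivity bound $I(u_k,\phi_k)\ge\|u_k\|_{\sob}^p$ recalled in the introduction shows that $u_k$ is bounded in $\sob$, so after passing to a subsequence $u_k\rightharpoonup u$ weakly in $\sob$ and, by the compact trace embedding $\sob\subset L^p(\partial\Omega)$, $u_k\to u$ strongly in $L^p(\partial\Omega)$; hence $\|u\|_{L^p(\partial\Omega)}=1$ and $|u_k|^p\to|u|^p$ in $L^1(\partial\Omega)$. Since $\A$ is bounded in $L^\infty(\partial\Omega)$ I may also assume $\phi_k\rightharpoonup^{*}\phi$ in $L^\infty(\partial\Omega)$; the set $\A$ is convex and weak-$*$ closed, so $0\le\phi\le1$, and testing against the constant $1\in L^1(\partial\Omega)$ gives $\int_{\partial\Omega}\phi\,d\mathcal H^{n-1}=a$. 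Passing to the limit, the Dirichlet part $\int_\Omega|\nabla u|^p+|u|^p\,dx$ is weakly lower semicontinuous by convexity, while the boundary term converges outright: pairing the weak-$*$ convergence of $\phi_k$ with the strong $L^1$ convergence of $|u_k|^p$ yields $\int_{\partial\Omega}\phi_k|u_k|^p\to\int_{\partial\Omega}\phi|u|^p$. Therefore $I(u,\phi)\le\liminf I(u_k,\phi_k)=\Lambda(\sigma,a)$, and since $(u,\phi)$ is admissible the chain $I(u,\phi)\le\Lambda(\sigma,a)\le\lambda(\sigma,\phi)\le I(u,\phi)$ collapses to equalities. In particular $u$ is the first eigenfunction for $\phi$, so Proposition \ref{regularidad} delivers property (1).

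It remains to identify the structure of $\phi$. Because $I(u,\psi)=\|u\|_{\sob}^p+\sigma\int_{\partial\Omega}\psi|u|^p\,d\mathcal H^{n-1}$ is affine in $\psi$, optimality of $\phi$ forces it to minimize the linear functional $\psi\mapsto\int_{\partial\Omega}\psi|u|^p\,d\mathcal H^{n-1}$ over $\{\psi\in\A\colon\int_{\partial\Omega}\psi\,d\mathcal H^{n-1}=a\}$; otherwise a competitor $\psi$ would give $\lambda(\sigma,\psi)\le I(u,\psi)<\Lambda(\sigma,a)$, a contradiction. This is precisely the setting of the bathtub principle: with $g=|u|^p\ge0$ there is a threshold $t$ such that any minimizer equals $1$ a.e.\ on $\{g<t\}$ and $0$ a.e.\ on $\{g>t\}$. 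Writing $s=t^{1/p}$ and using $u\ge0$, this means $\phi=1$ on $\{u<s\}$ and $\phi=0$ on $\{u>s\}$.

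The final point, which I expect to be the only genuinely delicate step, is to promote $\phi$ to an exact characteristic function when the level set $\{u=s\}$ carries positive surface measure. There the density $g=t$ is constant, so redistributing $\phi$ on $\{u=s\}$ changes neither $\int_{\partial\Omega}\phi\,d\mathcal H^{n-1}$ nor $\int_{\partial\Omega}\phi|u|^p\,d\mathcal H^{n-1}$, and $\phi$ remains optimal. Since $\mathcal H^{n-1}$ restricted to $\partial\Omega$ is non-atomic, I can select a measurable $D'\subset\{u=s\}$ with $\mathcal H^{n-1}(D')=\int_{\{u=s\}}\phi\,d\mathcal H^{n-1}$ and replace $\phi$ by $\chi_D$ with $D=\{u<s\}\cup D'$, so that $\{u<s\}\subset D\subset\{u\le s\}$ and $\mathcal H^{n-1}(D)=a$, which is property (2). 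The compact trace embedding, needed to upgrade the boundary term from merely lower semicontinuous to convergent, and this non-atomicity step are the two places where the geometry of $\partial\Omega$ truly enters.
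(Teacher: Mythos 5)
Your proposal is correct and follows essentially the same route as the paper: the direct method (weak $W^{1,p}$ convergence, compact trace embedding, weak-$*$ convergence of the potentials, and the pairing of strong $L^1$ convergence of $|u_k|^p$ with weak-$*$ convergence of $\phi_k$ for the boundary term), followed by the bathtub principle to replace the optimal $\phi$ by a characteristic function of a sub-level set of $u$. The only differences are presentational: you minimize jointly over pairs $(u,\phi)$ rather than over potentials with their eigenfunctions, and you spell out the exchange argument and the plateau case $\mathcal{H}^{n-1}(\{u=s\})>0$ that the paper delegates to a citation.
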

\begin{proof}
We consider a minimizing sequence $\{\phi_k\}_{k\in\N}\subset \mathcal A$ of \eqref{optimo} and their associated normalized eigenfunctions $\{u_k\}_{k\in\N}\subset \sob$.

From the reflexivity of the Sobolev space $W^{1,p} (\Omega)$, the compactness of the embeddings $W^{1,p}(\Omega)\hookrightarrow L^{p}(\partial \Omega)$ and $W^{1,p}(\Omega)\hookrightarrow L^{p}(\Omega)$ and $L^{\infty} (\partial \Omega)$ being a dual space, we obtain a subsequence (again denoted $\{u_k, \phi_k\}$) and $u \in W^{1,p} (\Omega)$, $\phi \in L^{\infty}(\partial \Omega)$ such that
\begin{align}
\label{debil} u_k\rightharpoonup u &\quad\hbox{in }W^{1,p} (\Omega)\\
\label{fuerteborde} u_k\rightarrow u &\quad\hbox{in }L^{p} (\partial \Omega)\\
\label{fuerte} u_k\rightarrow u &\quad\hbox{in }L^{p} (\Omega)\\
\label{debil*} \phi_k\overset{*}{\rightharpoonup} \phi
&\quad\hbox{in }L^{\infty} (\partial \Omega)
\end{align}
From the admissibility of $\phi_k$ and \eqref{debil*}, we get $ 0 \leq \phi \leq 1$ and $\int_{\partial \Omega} \phi\, d\mathcal{H}^{n-1}=a$. Using \eqref{fuerteborde}, we get $\|u\|_{L^p(\partial\Omega)}=1$. As a consequence of the lower semicontinuity of the norm $\|.\|_{W^{1,p} (\Omega)}$ with respect to weak convergence, we obtain
\begin{equation}\label{normap}
\int_{\Omega}|\nabla u|^p+|u|^pdx \leq \liminf\limits_{k\rightarrow
\infty} \int_{\Omega}|\nabla u_k|^p+|u_k|^pdx
\end{equation}
Using \eqref{fuerteborde}, we can see that  $ |u_k|^p\rightarrow |u|^p \quad\hbox{in }L^{1} (\partial \Omega)$. Therefore, taking into account \eqref{debil*} we obtain
\begin{equation}\label{fixup}
\int_{\partial \Omega}\phi_k |u_k|^p d\mathcal{H}^{n-1}\rightarrow \int_{\partial \Omega}\phi |u|^p d\mathcal{H}^{n-1}
\end{equation}
From \eqref{normap} and \eqref{fixup}, we have $(u,\phi)$ is an optimal pair for \eqref{optimo}.

By an elementary variation of the \emph{Bathtub Principle} (\cite[Pag. 28]{Lieberman}), we can prove that the minimization problem
\[
 \inf\limits_{\int_{\partial\Omega}\phi d\mathcal{H}^{n-1}=a}\int_{\partial\Omega} \phi|u|^pd\mathcal{H}^{n-1},
\]
has a solution of the form $\phi=\chi_D$, where $\{u<s\}\subset D\subset\{u\leq s\}$ and $\mathcal{H}^{n-1}(D)=a$ and therefore $(\chi_D, u)$ is an optimal pair for $\Lambda(\sigma, a)$.
\end{proof}

Now we prove a Lemma about the continuity of the eigenvalues and eigenfunctions with respect to the potential $\phi$ in the weak * topology.

\begin{lem}\label{continuidad.u}
Let $\phi_j,\phi\in L^\infty(\partial\Omega)$ be such that $\phi_j\stackrel{*}{\rightharpoonup} \phi$ in $L^\infty(\partial\Omega)$. Let $\lambda_j = \lambda(\sigma, \phi_j)$ and $\lambda = \lambda(\sigma, \phi)$ the eigenvalues defined by  \eqref{var prob} and let $u_j, u\in W^{1,p}(\Omega)$ be the positive normalized eigenfunctions associated to $\lambda_j$ and $\lambda$ respectively.

Then $\lambda_j\to \lambda$ and $u_j\to u$ strongly in $W^{1,p}(\Omega)$ as $j\to\infty$.
\end{lem}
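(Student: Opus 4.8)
The plan is to run the direct method on the variational characterization \eqref{var prob}, exploiting the compact trace embedding $\sob\hookrightarrow L^p(\partial\Omega)$ to control the nonlinear boundary term. First I would note that a weak* convergent sequence is norm-bounded, so $M:=\sup_j\|\phi_j\|_{L^\infty(\partial\Omega)}<\infty$, which gives uniform coercivity of the functionals $I(\cdot,\phi_j)$ on the constraint $\|\cdot\|_{L^p(\partial\Omega)}=1$. For the \emph{upper bound} I would use the limiting eigenfunction $u$ as a fixed admissible competitor in the problem for $\phi_j$, so that
\[
\lambda_j\le I(u,\phi_j)=\|u\|_{\sob}^p+\sigma\int_{\partial\Omega}\phi_j|u|^p\,d\mathcal{H}^{n-1}.
\]
Since $|u|^p\in L^1(\partial\Omega)$ and $\phi_j\overset{*}{\rightharpoonup}\phi$, the boundary integral tends to $\int_{\partial\Omega}\phi|u|^p\,d\mathcal{H}^{n-1}$, whence $\limsup_j\lambda_j\le I(u,\phi)=\lambda$.

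Next I would establish boundedness and the \emph{lower bound}. From the upper bound and $\|u_j\|_{\sob}^p=\lambda_j-\sigma\int_{\partial\Omega}\phi_j|u_j|^p\le\lambda_j+\sigma M$ the sequence $\{u_j\}$ is bounded in $\sob$, so along a subsequence $u_j\rightharpoonup v$ in $\sob$ and, by compactness, $u_j\to v$ in $L^p(\partial\Omega)$ and in $L^p(\Omega)$; in particular $v\ge 0$ and $\|v\|_{L^p(\partial\Omega)}=1$. Using weak lower semicontinuity of $\|\cdot\|_{\sob}$ together with the product-convergence argument already carried out for \eqref{fixup} (here $|u_j|^p\to|v|^p$ in $L^1(\partial\Omega)$ pairs with $\phi_j\overset{*}{\rightharpoonup}\phi$), I obtain
\[
\lambda=\lambda(\sigma,\phi)\le I(v,\phi)\le\liminf_j I(u_j,\phi_j)=\liminf_j\lambda_j.
\]

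Combining the two estimates yields $\lambda_j\to\lambda$ and forces $I(v,\phi)=\lambda$, so $v$ is a nonnegative normalized minimizer of \eqref{var prob}, i.e.\ a first eigenfunction for $\phi$; by simplicity (Proposition \ref{regularidad}) $v=u$, and since the limit is independent of the extracted subsequence, the whole sequence converges. For the strong $\sob$-convergence I would argue at the level of norms: from $\lambda_j\to\lambda$ and $\int_{\partial\Omega}\phi_j|u_j|^p\to\int_{\partial\Omega}\phi|u|^p$ it follows that $\|u_j\|_{\sob}^p=\lambda_j-\sigma\int_{\partial\Omega}\phi_j|u_j|^p\to\|u\|_{\sob}^p$; together with $u_j\rightharpoonup u$ and the uniform convexity of $\sob$ (Radon--Riesz property) this upgrades weak convergence to $u_j\to u$ strongly.

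The hard part will be the passage to the limit in the boundary term $\int_{\partial\Omega}\phi_j|u_j|^p$, where neither factor converges strongly on its own: it is precisely the compactness of the trace embedding that turns weak $\sob$-convergence of $u_j$ into strong $L^p(\partial\Omega)$-convergence and thereby makes the product converge. The second genuinely non-automatic step is the final upgrade from weak to strong convergence in $\sob$, which cannot come from compactness and instead rests on the convergence of norms combined with uniform convexity of the space.
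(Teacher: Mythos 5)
Your proof is correct and follows essentially the same route as the paper's: testing with the fixed eigenfunction $u$ to get $\limsup_j \lambda_j\le\lambda$, weak lower semicontinuity plus the compact trace embedding (pairing $|u_j|^p\to|v|^p$ in $L^1(\partial\Omega)$ with $\phi_j\stackrel{*}{\rightharpoonup}\phi$) for the lower bound, uniqueness of the nonnegative normalized eigenfunction to identify the weak limit, and convergence of norms combined with uniform convexity to upgrade to strong $W^{1,p}(\Omega)$-convergence. The only cosmetic difference is that the paper first bounds $\lambda_j$ by testing with a constant function, while you deduce boundedness of $\{u_j\}$ from the upper-bound step together with the Banach--Steinhaus bound on $\{\phi_j\}$; both are equally valid.
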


\begin{proof}
First, define $v\equiv  \mathcal{H}^{n-1}(\partial\Omega)^{-1/p}$ and from \eqref{var prob} we get
$$
\lambda_j\le I(v, \phi_j) = \frac{|\Omega| + \int_\Omega \phi_j}{\mathcal{H}^{n-1}(\partial\Omega)}\le C
$$
for every $j\in\N$. Therefore, since $\|u_j\|_{W^{1,p}(\Omega)}\le \lambda_j$ (recall that the eigenfunctions $u_j$ are normalized) it follows that $\{u_j\}_{j\in\N}$ is bounded in $W^{1,p}(\Omega)$.

From these, we obtain the existence of a function $w\in \sob$ such that, for a subsequence,
\begin{align*}
&u_j\rightharpoonup w \qquad \text{weakly in } \sob\\
&u_j\to w \qquad \text{strongly in } L^p(\Omega)\\
&u_j\to w \qquad \text{strongly in } L^p(\partial\Omega)
\end{align*}
It then follows that $w\ge 0$ and that $\|w\|_{L^p(\partial\Omega)} = 1$. 

Now, from the weakly sequentially lower semicontinuity it holds
\begin{equation}\label{w=u}
\lambda\le I(w,\phi)\le \liminf I(u_j, \phi) = \liminf I(u_j, \phi_j) + \sigma \int_{\partial\Omega} (\phi-\phi_j)|u_j|^p\, d\mathcal{H}^{n-1}.
\end{equation}
Since $|u_j|^p\to |u|^p$ strongly in $L^1(\partial\Omega)$, it easily follows that
$$
\lambda\le \liminf \lambda_j.
$$

For the reverse inequality, we proceed in a similar fashion. In fact, from \eqref{var prob}
$$
\lambda_j \le I(u, \phi_j).
$$
Therefore
$$
\limsup \lambda_j \le \lim I(u, \phi_j) = I(u, \phi)=\lambda,
$$
so $\lambda_j\to \lambda$.

Finally, from \eqref{w=u}, one obtains that $I(w, \phi)=\lambda$ and since there exists a unique nonnegative normalized eigenfunction associated to $\lambda$ it follows that $w=u$. Moreover, again from \eqref{w=u} it is easily seen that $\|u_j\|_{\sob}\to\|u\|_{\sob}$ and so $u_j\to u$ strongly in $\sob$ and, since the limit is uniquely determined, the whole sequence $\{u_j\}_{j\in\N}$ is convergent.
\end{proof}

The next Lemma, that was proved in \cite{DPFBN} gives the strict monotonicity of the quantity $\Lambda(\infty, a)$ with respect to $a$ and will be helpful in showing the behavior of $\Lambda(\sigma, a)$ for $\sigma\to\infty$.

\begin{lem}[Corollary 3.7, \cite{DPFBN}]\label{monot}
The function $\Lambda (\infty,\cdot)$ is strictly monotonic.
\end{lem}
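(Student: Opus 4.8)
The plan is to show that $\Lambda(\infty,\cdot)$ is in fact strictly increasing, and I would begin by recording the easy monotone (non-strict) half. If $0\le a_1<a_2\le \mathcal{H}^{n-1}(\partial\Omega)$ and $E_2\subset\partial\Omega$ has $\mathcal{H}^{n-1}(E_2)=a_2$, then since $\mathcal{H}^{n-1}$ restricted to the smooth boundary $\partial\Omega$ is non-atomic I can choose $E_1\subset E_2$ with $\mathcal{H}^{n-1}(E_1)=a_1$. Every $u\in\sob$ admissible for $\lambda(\infty,E_2)$ in \eqref{agujero} (i.e.\ vanishing $\mathcal{H}^{n-1}$-a.e.\ on $E_2$) also vanishes on $E_1$, so the admissible class for $E_2$ is contained in that for $E_1$; hence $\lambda(\infty,E_1)\le\lambda(\infty,E_2)$. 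Taking the infimum over all such $E_2$ in \eqref{agujerooptimo} gives $\Lambda(\infty,a_1)\le\Lambda(\infty,a_2)$.

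For the strict inequality I would argue by contradiction. Suppose $\Lambda(\infty,a_1)=\Lambda(\infty,a_2)$ for some $a_1<a_2$; by the previous paragraph $\Lambda(\infty,\cdot)$ is then constant on $[a_1,a_2]$. Let $E_2$ be an optimal configuration for $\Lambda(\infty,a_2)$ and $u_2$ its positive normalized first eigenfunction (existence of optimal sets and simplicity of the first eigenvalue being available, as in \cite{DPFBN} and in the spirit of Proposition~\ref{regularidad}). Choosing $E_1\subset E_2$ with $\mathcal{H}^{n-1}(E_1)=a_1$ as above, the function $u_2$ vanishes on $E_1$ and is therefore admissible for $\lambda(\infty,E_1)$, so $\lambda(\infty,E_1)\le\|u_2\|^p_{\sob}=\lambda(\infty,E_2)$. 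Combined with the chain $\Lambda(\infty,a_1)\le\lambda(\infty,E_1)\le\lambda(\infty,E_2)=\Lambda(\infty,a_2)=\Lambda(\infty,a_1)$, every inequality is forced to be an equality. In particular $u_2$ realizes the infimum $\lambda(\infty,E_1)$ while satisfying only the constraint $u=0$ on $E_1$, so $u_2$ is a first eigenfunction of the limit problem \eqref{agujeroe} associated with $E_1$.

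The contradiction then comes from the boundary behaviour of this eigenfunction. On $\partial\Omega\setminus E_1$ the function $u_2$ satisfies the natural (Steklov-type) boundary condition of \eqref{agujeroe} and no Dirichlet constraint. Since $u_2\ge 0$ and, by the Harnack inequality as in Proposition~\ref{regularidad}, $u_2>0$ in $\Omega$, a Hopf-type boundary point lemma for the $p$-Laplacian shows that $u_2$ cannot vanish on a subset of $\partial\Omega\setminus E_1$ of positive $\mathcal{H}^{n-1}$-measure: at such a point the outward normal derivative would be strictly negative, contradicting $|\nabla u_2|^{p-2}\partial u_2/\partial\n=\lambda|u_2|^{p-2}u_2=0$. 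Hence $u_2>0$ $\mathcal{H}^{n-1}$-a.e.\ on $\partial\Omega\setminus E_1$. But $E_2\setminus E_1\subset\partial\Omega\setminus E_1$ has measure $a_2-a_1>0$ and $u_2\equiv 0$ there, a contradiction; therefore $\Lambda(\infty,a_1)<\Lambda(\infty,a_2)$.

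The genuine obstacle is precisely this last step: justifying the strict positivity of the eigenfunction $\mathcal{H}^{n-1}$-a.e.\ on the unconstrained part of the boundary. For $p\neq 2$ the boundary strong maximum principle and the Hopf lemma require the global regularity of $u_2$ up to $\partial\Omega$ (Proposition~\ref{regularidad}) together with a careful comparison argument near $\partial\Omega\setminus E_1$; by contrast, the nesting of admissible classes and the squeezing of the infima are entirely elementary.
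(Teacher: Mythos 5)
First, note that the paper does not actually prove this lemma: it is imported verbatim as Corollary 3.7 of \cite{DPFBN}, so there is no internal proof to compare yours against, and what follows assesses your argument on its own terms. Your first two steps are sound: non-strict monotonicity by nesting the admissible classes, and the squeezing argument showing that, if $\Lambda(\infty,a_1)=\Lambda(\infty,a_2)$, then the eigenfunction $u_2$ of an optimal configuration $E_2$ for $\Lambda(\infty,a_2)$ is also a minimizer for $\lambda(\infty,E_1)$ whenever $E_1\subset E_2$ has $\mathcal{H}^{n-1}(E_1)=a_1$. You are also right to take the existence of optimal configurations for \eqref{agujerooptimo} from \cite{DPFBN} rather than from the $\sigma\to\infty$ convergence theorem of Section 3 of this paper, whose proof invokes the present lemma and would make the argument circular.

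The gap is in the final step, and it is genuine. A Hopf--V\'azquez boundary point lemma is a pointwise statement: to run it at a point $x_0\in E_2\setminus E_1$ you need (i) $u_2\in C^1$ up to the boundary near $x_0$, and (ii) the Steklov condition $|\nabla u_2|^{p-2}\tfrac{\partial u_2}{\partial\n}=\lambda|u_2|^{p-2}u_2$ to hold \emph{pointwise} at $x_0$. For the limit problem \eqref{agujeroe} both are available only on relatively open pieces of $\partial\Omega$ disjoint from $E_1$ (via an $L^\infty$ bound and Lieberman-type boundary regularity, which Proposition \ref{regularidad} does not cover, being stated for \eqref{pde}); the weak Euler--Lagrange equation gives nothing pointwise at a generic point of the merely measurable set $E_2\setminus E_1$. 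Since an optimal $E_2$ is a priori only $\mathcal{H}^{n-1}$-measurable -- it could be dense in $\partial\Omega$ -- your \emph{arbitrary} choice of $E_1\subset E_2$ may leave no relatively open window $W$ with $W\cap \overline{E_1}=\emptyset$ and $\mathcal{H}^{n-1}(W\cap E_2)>0$, and then the Hopf argument has nowhere to start; the blanket claim that $u_2>0$ $\mathcal{H}^{n-1}$-a.e.\ on $\partial\Omega\setminus E_1$ for arbitrary measurable $E_1$ is a unique-continuation-from-boundary-sets-of-positive-measure statement, far beyond a Hopf lemma and not available for $p\neq 2$. The repair is to choose $E_1$ carefully rather than arbitrarily: pick a relative ball $B\subset\partial\Omega$ centered at a density point of $E_2$, small enough that $0<\mathcal{H}^{n-1}(E_2\cap B)<a_2-a_1$ (so that $\mathcal{H}^{n-1}(E_2\setminus B)>a_1$), and take $E_1\subset E_2\setminus B$ with $\mathcal{H}^{n-1}(E_1)=a_1$. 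Then the relative interior $W$ of $B$ is disjoint from $E_1$, so traces supported in $W$ are admissible variations and $u_2$ satisfies the Steklov condition weakly on $W$; Lieberman's estimates give $u_2\in C^{1,\alpha}$ up to any $W'\Subset W$, the boundary condition then holds pointwise on $W'$, and V\'azquez's Hopf lemma (whose integral condition holds for $\beta(s)=s^{p-1}$, since $\Delta_p u_2=u_2^{p-1}$) forces $u_2>0$ everywhere on $W'$. Choosing $W'$ to contain a positive-measure portion of $E_2\cap W$, on which $u_2=0$ a.e., yields the contradiction. With this modification your strategy does close the proof.
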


Now we are ready to prove the convergence of $\Lambda(\sigma, a)$ to $\Lambda(\infty, a)$ as $\sigma\to\infty$.

\begin{thm} If $\sigma_j$ is a sequence tending to $\infty$ and $(u_j,D_j)$ associated optimal pairs of \eqref{optimo}, then there
exists a subsequence (that we still call $\sigma_j$) and an optimal pair $(u,D)$ of the problem \eqref{agujerooptimo} such that
$u_j\rightharpoonup u$ in $\sob$, $\chi_{D_j}\overset{*}{\rightharpoonup}\chi_D$ in $L^{\infty}(\partial\Omega)$. 
\end{thm}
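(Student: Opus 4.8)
The plan is to sandwich $\Lambda(\sigma_j,a)$ between copies of $\Lambda(\infty,a)$, extract weak limits along the sequence, and then show that the divergent penalization $\sigma_j\int_{\partial\Omega}\chi_{D_j}|u_j|^p\,d\mathcal{H}^{n-1}$ forces the limiting potential to concentrate on a set where the limiting eigenfunction vanishes; the delicate point is to recover that the weak* limit is again a characteristic function, for which Lemma \ref{monot} is decisive.

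First I would prove the uniform upper bound $\Lambda(\sigma_j,a)\le \Lambda(\infty,a)$, valid for every $\sigma_j$. Indeed, given any admissible $E$ with $\mathcal{H}^{n-1}(E)=a$ and any competitor $v$ for \eqref{agujero} (so $\|v\|_{L^p(\partial\Omega)}=1$ and $v=0$ $\mathcal{H}^{n-1}$-a.e. on $E$), the choice $\phi=\chi_E$ makes the boundary penalty vanish, whence $\Lambda(\sigma_j,a)\le I(v,\chi_E)=\|v\|_{\sob}^p$; taking infima over $v$ and then over $E$ gives the claim. Writing the optimal pair as $(u_j,\chi_{D_j})$ and using $I(u_j,\chi_{D_j})\ge\|u_j\|_{\sob}^p$ (as $\chi_{D_j}\ge 0$), this bound yields $\|u_j\|_{\sob}^p\le\Lambda(\infty,a)$, so $\{u_j\}$ is bounded in $\sob$. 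By reflexivity and the compact embeddings I pass to a subsequence with $u_j\rightharpoonup u$ in $\sob$, $u_j\to u$ in $L^p(\Omega)$ and $L^p(\partial\Omega)$, and $\chi_{D_j}\stackrel{*}{\rightharpoonup}\phi$ with $0\le\phi\le1$ and $\int_{\partial\Omega}\phi\,d\mathcal{H}^{n-1}=a$; the strong boundary convergence forces $\|u\|_{L^p(\partial\Omega)}=1$.

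Next I would exploit the penalty term. From $\Lambda(\sigma_j,a)\le\Lambda(\infty,a)$ and $\sigma_j\int_{\partial\Omega}\chi_{D_j}|u_j|^p\,d\mathcal{H}^{n-1}\le I(u_j,\chi_{D_j})$ one gets $\int_{\partial\Omega}\chi_{D_j}|u_j|^p\,d\mathcal{H}^{n-1}\le\Lambda(\infty,a)/\sigma_j\to0$. Arguing exactly as for \eqref{fixup}, the strong convergence $|u_j|^p\to|u|^p$ in $L^1(\partial\Omega)$ together with $\chi_{D_j}\stackrel{*}{\rightharpoonup}\phi$ gives $\int_{\partial\Omega}\chi_{D_j}|u_j|^p\,d\mathcal{H}^{n-1}\to\int_{\partial\Omega}\phi|u|^p\,d\mathcal{H}^{n-1}$, so that $\int_{\partial\Omega}\phi|u|^p\,d\mathcal{H}^{n-1}=0$. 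Since $\phi\ge0$, this forces $u=0$ $\mathcal{H}^{n-1}$-a.e. on $\{\phi>0\}$.

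The main obstacle is to upgrade $\phi$ to a genuine characteristic function, since weak* limits of characteristic functions are generically not of this form. Set $b=\mathcal{H}^{n-1}(\{\phi>0\})$; from $\phi\le1$ and $\int_{\partial\Omega}\phi\,d\mathcal{H}^{n-1}=a$ we have $b\ge a$. As $u$ vanishes on $\{\phi>0\}$ and $\|u\|_{L^p(\partial\Omega)}=1$, it is admissible for \eqref{agujero} with $E=\{\phi>0\}$, so weak lower semicontinuity gives $\|u\|_{\sob}^p\le\liminf\|u_j\|_{\sob}^p\le\Lambda(\infty,a)$, while simultaneously $\|u\|_{\sob}^p\ge\lambda(\infty,\{\phi>0\})\ge\Lambda(\infty,b)$. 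If $b>a$, the strict monotonicity of $\Lambda(\infty,\cdot)$ from Lemma \ref{monot} yields $\Lambda(\infty,b)>\Lambda(\infty,a)$, a contradiction; hence $b=a$. Then $\int_{\{\phi>0\}}(1-\phi)\,d\mathcal{H}^{n-1}=0$ forces $\phi=1$ a.e. on $\{\phi>0\}$, i.e. $\phi=\chi_D$ with $D=\{\phi>0\}$ and $\mathcal{H}^{n-1}(D)=a$. Tracing back the chain of inequalities finally shows $\|u\|_{\sob}^p=\lambda(\infty,D)=\Lambda(\infty,a)$ and $\Lambda(\sigma_j,a)\to\Lambda(\infty,a)$, so $(u,D)$ is the desired optimal pair for \eqref{agujerooptimo}.
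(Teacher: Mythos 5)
Your proof is correct and follows essentially the same strategy as the paper: a uniform bound on $\Lambda(\sigma_j,a)$ obtained by testing with pairs $(v,\chi_E)$ where $v$ vanishes on $E$, extraction of the weak limits \eqref{conv1}--\eqref{conv4}, and then Lemma \ref{monot} to identify the limit. The only difference is that the paper delegates the concluding argument to the citation of \cite[Theorem 1.2]{DPFBR}, whereas you carry it out in full — the penalty estimate $\int_{\partial\Omega}\chi_{D_j}|u_j|^p\,d\mathcal{H}^{n-1}\le \Lambda(\infty,a)/\sigma_j\to 0$ forcing $u=0$ on $\{\phi>0\}$, and the strict-monotonicity contradiction showing $\mathcal{H}^{n-1}(\{\phi>0\})=a$, hence $\phi=\chi_D$ — and your version is sound.
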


\begin{proof}
We consider $E\subset\partial\Omega$ closed such that $\mathcal{H}^{n-1}(E)=a$ and $v\in \sob$, $\|v\|_{L^p(\partial\Omega)}=1$ such that $ v=0$ in $E$. Therefore
\[\|u_j\|^p_{\sob} \leq I(u_j, \chi_{D_j}) = \Lambda (\sigma_j,a) \leq \lambda(\sigma_j, \chi_E) \le I(v, \chi_E)=\|v\|^p_{\sob}\]
Hence, the sequence $u_j$ is bounded in $\sob$. Therefore we can assume that there exists $u_{\infty} \in \sob$ and
$\phi_{\infty}\in L^{\infty}(\partial\Omega)$ such that
\begin{align}
\label{conv1}u_j & \rightharpoonup u_{\infty} \hbox{ in } \sob\\
\label{conv2} u_j &\rightarrow u_{\infty}  \hbox{ in }    L^p(\Omega)\\
\label{conv3} u_j  &\rightarrow u_{\infty}  \hbox{ in }   L^p(\partial\Omega)\\
\label{conv4} \chi_{D_j} & \overset{*}{\rightharpoonup}\phi_{\infty} \hbox{ in }   L^{\infty}(\partial\Omega)
\end{align}
From \eqref{conv3} and \eqref{conv4} we have that $\|u_{\infty}\|_{L^p(\partial\Omega)}=1$,
$\int_{\partial\Omega}\phi_{\infty}d\mathcal{H}^{n-1}=a$ and $0\leq\phi_{\infty}\leq 1$. The rest of the proof follows in a
completely analogous way, using Lemma \ref{monot},  to \cite[Theorem 1.2]{DPFBR}
\end{proof}

\section{Symmetry}
Throughout this section we assume that $\Omega$ is the unit ball $B(0,1)$. The goal of the section is to show that there exists an
optimal pair $(u, \chi_D)$ of the problem \eqref{pde} with $D$ a spherical cup in $S^{n-1} = \partial\Omega$. A key tool is played by the
\emph{spherical symmetrization}.

The spherical symmetrization of a set $A \subset \mathbb{R}^n$ with respect to an axis given by a unit vector $e$ is defined as follows: Given $r>0$ we consider $s_r>0$ such that
$\mathcal{H}^{n-1}(A\cap\partial B(0,r))=\mathcal{H}^{n-1}(B(re,s_r)\cap\partial B(0,r))$. We note that the sets $A\cap\partial
B(0,r)$ are $\mathcal{H}^{n-1}$-measurable for almost every $r\geq 0$.  Now we put:

 \[A^{*}=\bigcup_{0\leq r\leq 1}B(re,s_r)\cap\partial B(0,r)\]

The set $A^*$ is well defined and measurable whence $A$ is a measurable set. If $u\geq 0$ is a measurable function, we define its symmetrized function $u^*$ so that satisfies the relation $\{u^*\geq t\}=\{u\geq t\}^*$. We refer to \cite{Kawohl} for an exhaustive study of this symmetrization. In particular, we need the following known results:
\begin{thm}\label{propiedades}
Let $0\leq u \in W^{1,p}(\Omega)$ and let $u*$ be its symmetrized function. Then
\begin{enumerate}
\item $u^* \in W^{1,p}(\Omega)$
\item $u^*$ and $u$ are equi-measurable, i.e. they have the same distribution function, Hence for every continuos increasing function
$\Phi$:  $\int_{\Omega} \Phi(u^*)dx=\int_{\Omega} \Phi(u)dx$
\item $\int_{\Omega} uv dx\leq\int_{\Omega} u^*v^*dx$, for every measurable positive function $v$.
\item In a similar way $u$ and $u^*$ are equimeasurable respect to the Hausdorff measure on boundary of balls. Therefore, the two
previous items holds with $\partial \Omega$ and $d\mathcal{H}^{n-1}$ instead of $\Omega$ and $dx$, respectively.
\item $\int_{\Omega}|\nabla u^*|^pdx\leq \int_{\Omega}|\nabla u|^pdx$.
\end{enumerate}
\end{thm}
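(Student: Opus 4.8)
The plan is to reduce every assertion to a one--parameter family of rearrangements on the spheres $\partial B(0,r)$, $0<r\le 1$, and then integrate in the radial variable. Writing a point $x\in\Omega\setminus\{0\}$ in polar coordinates $x=r\omega$ with $\omega\in S^{n-1}$, the defining relation $\{u^*\ge t\}=\{u\ge t\}^*$ together with the construction of $A^*$ shows that, for each fixed $r$, the slice $\omega\mapsto u^*(r\omega)$ is exactly the \emph{spherical cap rearrangement} of $\omega\mapsto u(r\omega)$ on $\partial B(0,r)$: the superlevel set $\{u^*>t\}\cap\partial B(0,r)$ is the geodesic cap $B(re,s_r)\cap\partial B(0,r)$ centered at $re$ with the same $\mathcal{H}^{n-1}$--measure as $\{u>t\}\cap\partial B(0,r)$. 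Hence all five statements will follow from the corresponding \emph{one--sphere} facts, established first, combined with the coarea identity $\int_\Omega f\,dx=\int_0^1\big(\int_{\partial B(0,r)}f\,d\mathcal{H}^{n-1}\big)\,dr$, which holds because $|\nabla|x||\equiv 1$.

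For the equimeasurability $(2)$ and the Hardy--Littlewood inequality $(3)$: since the cap rearrangement on each sphere is measure preserving, $\mathcal{H}^{n-1}(\{u>t\}\cap\partial B(0,r))=\mathcal{H}^{n-1}(\{u^*>t\}\cap\partial B(0,r))$ for a.e.\ $r$; integrating this identity in $r$ gives $|\{u>t\}|=|\{u^*>t\}|$, and the formula $\int_\Omega\Phi(u^*)=\int_\Omega\Phi(u)$ is then the layer--cake representation $\int_\Omega\Phi(u)=\Phi(0)|\Omega|+\int_0^\infty\Phi'(t)\,|\{u>t\}|\,dt$. For $(3)$, on a single sphere the concentric superlevel caps of $u^*$ and $v^*$ are \emph{nested}, so one always contains the other, whence $\mathcal{H}^{n-1}(\{u^*>s\}\cap\{v^*>t\})\ge\mathcal{H}^{n-1}(\{u>s\}\cap\{v>t\})$; a double layer--cake and integration in $s,t$ give the spherical Hardy--Littlewood inequality, which I integrate in $r$. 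Statement $(4)$ is the same argument performed only on the single sphere $\partial B(0,1)=\partial\Omega$.

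The gradient inequality $(5)$ is the substantial part, and $(1)$ follows from it for free. I would prove $(5)$ first for smooth nonnegative $u$ and then pass to the general case by density. Decomposing the gradient into its radial and spherical--tangential parts, $|\nabla u|^2=|\partial_r u|^2+r^{-2}|\nabla_{S}u|^2$ with $\nabla_S$ the intrinsic gradient on the unit sphere, the cap rearrangement leaves the radial structure intact while not increasing the tangential energy $\int|\nabla_S u|^p$, by the \emph{spherical Pólya--Szegő inequality}. The rigorous route, however, is not the naive termwise decomposition but the coarea formula applied to $u$ on all of $\Omega$, combined with the \emph{spherical isoperimetric inequality} (geodesic caps minimize perimeter among subsets of $S^{n-1}$ of prescribed measure) used to compare the perimeters of $\{u>t\}$ and $\{u^*>t\}$ surface by surface, followed by recombination via Hölder. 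Once $(5)$ is available, $(2)$ gives $\|u^*\|_{L^p(\Omega)}=\|u\|_{L^p(\Omega)}$ and $(5)$ gives $\|\nabla u^*\|_{L^p(\Omega)}\le\|\nabla u\|_{L^p(\Omega)}$, so $u^*\in W^{1,p}(\Omega)$, proving $(1)$.

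The main obstacle is precisely $(5)$: justifying the Pólya--Szegő inequality for spherical symmetrization, because of the delicate coupling between the radial and angular directions and the need to make the coarea/isoperimetric comparison valid for merely Sobolev $u$ whose level sets need not be smooth. Correctly invoking the spherical isoperimetric inequality and quantifying the perimeter comparison for a.e.\ level $t$ is where the genuine work lies; the other four items are essentially bookkeeping on top of the single--sphere rearrangement together with Fubini in the radial variable.
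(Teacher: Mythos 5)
The first thing to note is that the paper does not prove this theorem at all: it is stated as a collection of known facts, with a pointer to Kawohl's monograph on rearrangements, so the comparison here is really between your sketch and the standard literature proofs. Items (2), (3) and (4) of your outline are essentially correct and are indeed ``bookkeeping'': sphere-by-sphere measure preservation, nestedness of concentric geodesic caps giving $\mathcal{H}^{n-1}(\{u^*>s\}\cap\{v^*>t\})\ge\mathcal{H}^{n-1}(\{u>s\}\cap\{v>t\})$ on each sphere, the layer-cake formula, and radial integration using $|\nabla |x||\equiv 1$ are exactly the right ingredients. (One point you gloss over in (4): one must check that the trace of $u^*$ on $\partial\Omega$ coincides with the cap rearrangement of the trace of $u$, which is not automatic from the slice-wise definition.)

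The genuine gap is exactly where you locate it, item (5), but the mechanism you propose would not close it. For Schwarz symmetrization the coarea/isoperimetric argument works because the rearranged level sets are balls in $\mathbb{R}^n$, so the isoperimetric inequality in $\mathbb{R}^n$ compares $P(\{u>t\})$ with $P(\{u^*>t\})$ directly. For spherical (cap) symmetrization, the sets $\{u^*>t\}$ are unions of geodesic caps over the spheres $\partial B(0,r)$, and the comparison you need, $P(\{u^*>t\})\le P(\{u>t\})$ in $\mathbb{R}^n$, is \emph{not} a consequence of the isoperimetric inequality on $S^{n-1}$ applied surface by surface: the perimeter of a set in $\mathbb{R}^n$ does not decompose as a radial integral of the perimeters of its spherical slices, because the radial variation of the slices also contributes, and that contribution is not controlled by slice-wise cap comparison (likewise, the radial part of $\nabla u$ is not pointwise preserved by the rearrangement, since values are permuted differently on nearby spheres). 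The statement that cap symmetrization does not increase perimeter --- equivalently, the P\'olya--Szeg\H{o} inequality for this rearrangement --- is itself a theorem of essentially the same depth as (5); the classical proof is due to Sarvas, and the modern route approximates the cap symmetrization by a sequence of polarizations (two-point rearrangements), each of which trivially preserves $\int_\Omega|\nabla u|^p\,dx$, and then concludes by lower semicontinuity. As written, your sketch either assumes this hard ingredient or leaves the radial--angular coupling unresolved; you candidly flag this, but flagging the gap does not fill it, and since your item (1) is derived from (5), it inherits the same gap. If the intent is to cite the result (as the paper does), say so explicitly; if the intent is to prove it, the polarization argument is the one to carry out.
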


With these preliminaries, we can now prove the main result of the section.

\begin{thm} Let $\Omega=B(0,1)$. Then there exists an optimal pair $(u, \chi_E)$ of the problem \eqref{pde} with $E$ a spherical cup in $\partial\Omega$.

\end{thm}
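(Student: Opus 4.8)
The plan is to take any optimal pair $(u,\chi_D)$ produced by the existence theorem of Section~3 and replace it by its spherical symmetrization, showing that the symmetrized pair is again optimal while the associated potential becomes the characteristic function of a spherical cap. Recall that for such a pair $u$ is the unique nonnegative normalized eigenfunction, that $D\subset\partial\Omega$ is a sublevel set $\{u<s\}\subset D\subset\{u\le s\}$ with $\mathcal{H}^{n-1}(D)=a$, and, crucially, that $D$ realizes the \emph{Bathtub} infimum, i.e. $\int_D|u|^p\,d\mathcal{H}^{n-1}=\inf\{\int_{D'}|u|^p\,d\mathcal{H}^{n-1}\colon \mathcal{H}^{n-1}(D')=a\}$. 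Fix an axis $e\in S^{n-1}$ and let $u^*$ be the spherical symmetrization of $u$ with respect to $e$.

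First I would collect the consequences of Theorem~\ref{propiedades}. By items (5) and (2), the latter applied with $\Phi(t)=t^p$ and using $u\ge 0$, the interior energy does not increase:
\[
\int_\Omega |\nabla u^*|^p+|u^*|^p\,dx\le \int_\Omega |\nabla u|^p+|u|^p\,dx .
\]
By item (4), $u$ and $u^*$ are equimeasurable on $\partial\Omega$, so taking $\Phi(t)=t^p$ gives $\|u^*\|_{L^p(\partial\Omega)}=\|u\|_{L^p(\partial\Omega)}=1$, whence $u^*$ is admissible in \eqref{var prob}. This is the routine part.

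Next comes the boundary potential term, which is the heart of the argument. The superlevel sets $\{u^*\ge t\}\cap\partial\Omega=(\{u\ge t\}\cap\partial\Omega)^*$ are spherical caps centered at $e$, so the sublevel sets of $u^*$ on $\partial\Omega$ are spherical caps centered at $-e$. Applying the Bathtub Principle to $u^*$ on $\partial\Omega$ yields a minimizer $E$ of $\int_E|u^*|^p\,d\mathcal{H}^{n-1}$ among sets of measure $a$, with $\{u^*<s'\}\subset E\subset\{u^*\le s'\}$ for some $s'$; enlarging the cap $\{u^*<s'\}$ within the plateau $\{u^*=s'\}$ until its measure equals $a$, the set $E$ may be taken to be a genuine spherical cap. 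Since $u$ and $u^*$ are equimeasurable on $\partial\Omega$, the functions $|u|^p$ and $|u^*|^p$ share the same increasing rearrangement, so the two Bathtub infima coincide:
\[
\inf_{\mathcal{H}^{n-1}(E')=a}\int_{E'}|u^*|^p\,d\mathcal{H}^{n-1}=\inf_{\mathcal{H}^{n-1}(D')=a}\int_{D'}|u|^p\,d\mathcal{H}^{n-1}.
\]
As $D$ attains the right-hand infimum and $E$ attains the left-hand one, we get $\int_E|u^*|^p\,d\mathcal{H}^{n-1}=\int_D|u|^p\,d\mathcal{H}^{n-1}$. Combining this with the interior estimate gives $I(u^*,\chi_E)\le I(u,\chi_D)=\Lambda(\sigma,a)$. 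Since $(u^*,\chi_E)$ is admissible and $E$ has measure $a$, we also have $\Lambda(\sigma,a)\le\lambda(\sigma,\chi_E)\le I(u^*,\chi_E)$, so every inequality is an equality and $(u^*,\chi_E)$ is an optimal pair with $E$ a spherical cap.

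The main obstacle is the careful treatment of the boundary rather than the energy bookkeeping: one must justify that the trace of $u^*$ on $\partial\Omega$ genuinely is the spherical symmetrization of the trace of $u$, so that item (4) and the cap structure of its level sets apply, and that the Bathtub minimizer can be selected as an honest spherical cap even when $u^*$ is constant on a set of positive $\mathcal{H}^{n-1}$-measure on $\partial\Omega$. Once these points are settled, the equimeasurability identities close the argument.
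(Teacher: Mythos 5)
Your proposal is correct, but it follows a genuinely different route from the paper. The paper does not use the Bathtub structure of $D$ at all: starting from \emph{any} optimal pair $(u,\chi_D)$, it sets $E:=((D^c)^*)^c$, which is automatically a spherical cap of the right measure, and then gets the boundary inequality in one stroke from item (3) of Theorem \ref{propiedades} (the Hardy--Littlewood-type inequality $\int uv\le\int u^*v^*$, valid on $\partial\Omega$ by item (4)) applied to $v=\chi_{D^c}$: since $\chi_E=1-(\chi_{D^c})^*$ and $|u^*|^p=(|u|^p)^*$, complementation turns $\int_{\partial\Omega}\chi_{D^c}|u|^p\,d\mathcal{H}^{n-1}\le\int_{\partial\Omega}(\chi_{D^c})^*|u^*|^p\,d\mathcal{H}^{n-1}$ into $\int_{\partial\Omega}\chi_E|u^*|^p\,d\mathcal{H}^{n-1}\le\int_{\partial\Omega}\chi_D|u|^p\,d\mathcal{H}^{n-1}$. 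You instead exploit that the optimal $D$ is a Bathtub minimizer for $|u|^p$ (true by the existence theorem, and in fact for any optimal pair, since $I(u,\chi_D)\le I(u,\chi_{D'})$ forces $\int_D|u|^p\le\int_{D'}|u|^p$), combine equimeasurability of the traces with the rearrangement-invariance of the Bathtub infimum, and select a cap inside the plateau. Both arguments rest on items (2), (4), (5) for admissibility and the interior energy, and both are sound; what the paper's complement trick buys is brevity and independence from the sublevel-set description of $D$ (no plateau selection needed, and the measure constraint $\mathcal{H}^{n-1}(E)=a$ is immediate), while your version yields the slightly stronger conclusion that the boundary terms are actually \emph{equal}, $\int_E|u^*|^p\,d\mathcal{H}^{n-1}=\int_D|u|^p\,d\mathcal{H}^{n-1}$, not merely comparable. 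Finally, the trace issue you flag as the ``main obstacle'' is precisely what item (4) of Theorem \ref{propiedades} asserts (citing \cite{Kawohl}), so in the paper's framework it is an input, not a gap.
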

\begin{proof}
Let $(u, \chi_D)$ be an optimal pair. We define $E:=((D^c)^*)^c$. Since $(D^c)^*$ is a spherical cup it follows that $E$ is also a spherical cup.  

We note that $\chi_E=1-(\chi_{D^c})^*$, therefore it is easy to show, from (c) in Theorem \ref{propiedades} that
\[\int_{\partial\Omega} \chi_E |u^*|^p d\mathcal{H}^{n-1}\leq \int_{\partial\Omega} \chi_D |u|^p d\mathcal{H}^{n-1}.\]
We note that $\int_{\partial\Omega} |u^*|^pd\mathcal{H}^{n-1}=1$, so $u^*$ is an admissible function in \eqref{var prob}.
Moreover,
\[\int_{\Omega}|\nabla u^*|^p+|u^*|^pdx+\sigma\int_{\partial \Omega}\chi_E|u^*|^pd\mathcal{H}^{n-1}\leq \int_{\Omega}|\nabla u|^p+|u|^pdx+\sigma\int_{\partial
\Omega}\chi_D|u|^pd\mathcal{H}^{n-1}.\] Consequently, $(u^*, \chi_E)$ is an optimal pair.
\end{proof}

\section{Derivative of Eigenvalues}
Henceforth we put $\Gamma:=\partial\Omega$. In this section we compute derivatives of the eigenvalues $\lambda(\sigma, \chi_D)$ with respect to perturbations of the set $D$. We also assume that the set $D\subset \Gamma$ is the closure of a regular relatively open set.

For this purpose, we introduce the vector field
$V:\mathbb{R}^n\rightarrow\mathbb{R}^n$ supported on a narrow neighborhood of $\Gamma$ with $V\cdot\n=0$, where $\n$ is the
outer normal vector. We consider the flow
\begin{equation}\label{flujo}
    \left\{%
\begin{array}{rl}
    \frac{d}{dt}\Psi_t(x)&=V(\Psi_t(x)) \\
    \Psi_0(x)&=x \\
\end{array}%
\right.
\end{equation}
We note that the condition $V\cdot\n=0$ implies that $\Psi_t(\Gamma)=\Gamma$. From \eqref{flujo}, it follows the asymptotic expansions
\begin{align}
D\Psi_t&=I+tDV+o(t),\\
(D\Psi_t)^{-1}&=I-tDV+o(t),\\
J\Psi_t&=1+t\dive V+o(t).
\end{align}
Here $D\Psi_t$ and $J\Psi_t$ denote the differential matrix of $\Psi_t$ and its jacobian, respectively. See \cite{Henrot-Pierre}.

In order to try with surface integrals, we need the following formulas whose proofs can be founded in \cite{Henrot-Pierre}. The tangential Jacobian of $\Psi_t$ is given by
\[ J_{\Gamma}\Psi_t(x)=|(D\Psi(x))^{-1}\n|J\Psi(x)=1+t\dive_{\Gamma}V+o(t)\quad x\in\Gamma\]
where $\dive_{\Gamma}V$ is the tangential divergence operator defined by 
$$
\dive_{\Gamma}V=\dive V-\n^T DV\n.
$$

The main result here is the following

\begin{thm}\label{teo.dif}
Let $\sigma>0$ be fixed and $D\subset\Gamma$ be the closure of a smooth relatively open set. Let $u \in W^{1,p}(\Omega)$ be the nonnegative normalized eigenfunction for $\lambda(\sigma,\chi_D)$. 

Then, the function $\lambda(t):=\lambda(\sigma, \chi_{D_t})$ where $D_t=\Psi_t(D)$ is differentiable at $t=0$ and 
\[\lambda'(0) = -\sigma\int_{\partial_{\Gamma} D} |u_0|^p (\n_{\Gamma}\cdot V)d\mathcal{H}^{n-2}\] 
where $\n_{\Gamma}$ denotes the unit normal vector  exterior to
$\partial_{\Gamma}D$ relative to the tangent space of $\Gamma$.
\end{thm}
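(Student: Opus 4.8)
The plan is to exploit the variational characterization \eqref{var prob} together with the simplicity of the first eigenvalue, reducing the computation to a Hellmann--Feynman type formula in which only the \emph{explicit} dependence of the functional on $t$ survives. The key structural observation is that, unlike classical Hadamard problems, the ambient domain $\Omega$ does not move: since $V\cdot\n=0$ the flow preserves $\Gamma$, so the only $t$-dependence of $I(\,\cdot\,,\chi_{D_t})$ enters through the boundary coefficient $\chi_{D_t}$. Writing $J(t,v)=\|v\|_{\sob}^p+\sigma\int_{D_t}|v|^p\,d\mathcal{H}^{n-1}$, so that $\lambda(t)=\min\{J(t,v):\|v\|_{L^p(\partial\Omega)}=1\}$, the heuristic is that $\lambda'(0)=\partial_t J(0,u_0)=\sigma\frac{d}{dt}\big|_{t=0}\int_{D_t}|u_0|^p\,d\mathcal{H}^{n-1}$, the contribution of the variation of the eigenfunction being annihilated by the Euler--Lagrange equation \eqref{pde} together with the preservation of the normalization $\|u_t\|_{L^p(\partial\Omega)}=1$.

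Rather than differentiating $t\mapsto u_t$ directly (which would require justifying its regularity), I would make this rigorous by a two-sided squeeze. Testing the minimization defining $\lambda(t)$ with the fixed competitor $u_0$ yields the upper bound $\lambda(t)-\lambda(0)\le \sigma\big(\int_{D_t}-\int_D\big)|u_0|^p\,d\mathcal{H}^{n-1}$, while testing the minimization defining $\lambda(0)$ with $u_t$ yields the matching lower bound $\lambda(t)-\lambda(0)\ge \sigma\big(\int_{D_t}-\int_D\big)|u_t|^p\,d\mathcal{H}^{n-1}$. Dividing by $t$ and letting $t\to0^\pm$, both one-sided difference quotients are forced to a common limit once one knows that $u_t\to u_0$ and that the integrals over the thin symmetric difference $D_t\triangle D$ are controlled; the convergence $u_t\to u_0$ is exactly Lemma \ref{continuidad.u}, since $\chi_{D_t}\overset{*}{\rightharpoonup}\chi_D$ in $L^\infty(\partial\Omega)$ as $t\to 0$.

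It then remains to evaluate $\frac{d}{dt}\big|_{t=0}\int_{D_t}|u_0|^p\,d\mathcal{H}^{n-1}$. Because $V$ is tangent to $\Gamma$, the hypersurface $\Gamma$ is frozen and only the region $D_t$ sweeps within it, so this is the transport of a region across a fixed boundary and its rate of change is the flux of the fixed density through the moving edge $\partial_\Gamma D$. Concretely, via the change of variables $y=\Psi_t(x)$ with the tangential Jacobian expansion $J_{\Gamma}\Psi_t=1+t\,\dive_{\Gamma}V+o(t)$ recalled above, one arrives at $\int_D\dive_{\Gamma}(|u_0|^p V)\,d\mathcal{H}^{n-1}$, and the tangential divergence theorem on $\Gamma$ applied to the region $D$ --- the mean-curvature term dropping out precisely because $V\cdot\n=0$ --- converts this into the $(n-2)$-dimensional flux $\int_{\partial_\Gamma D}|u_0|^p\,(V\cdot\n_{\Gamma})\,d\mathcal{H}^{n-2}$. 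Tracking the orientation of the conormal $\n_\Gamma$ produces the sign in the statement, and this flux term only requires continuity of the trace of $u_0$ near $\partial_\Gamma D$, which is guaranteed by Proposition \ref{regularidad}. This is exactly the lower-dimensional integral advertised in the introduction.

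The step I expect to require the most care is the lower bound in the squeeze, where the density $|u_t|^p$ itself depends on $t$: one must simultaneously control the moving region $D_t\triangle D$ (a tubular neighbourhood of $\partial_\Gamma D$ of width $O(t)$) and the moving trace $|u_t|^p$. Showing that $\frac1t\int_{D_t\triangle D}|u_t|^p\,d\mathcal{H}^{n-1}$ has the same limit as with $u_0$ in place of $u_t$ needs convergence of the traces $u_t\to u_0$ that is strong enough along $\partial_\Gamma D$ --- the $L^p(\partial\Omega)$ convergence of Lemma \ref{continuidad.u} combined with the equi-H\"older bound of Proposition \ref{regularidad} --- together with a co-area estimate for the surface integrals over the family of parallel edges. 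The smoothness hypothesis on $D$ is what makes $\partial_\Gamma D$ a genuine $C^1$ set inside $\Gamma$, along which the codimension-two integral and the tangential divergence theorem are meaningful.
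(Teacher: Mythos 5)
Your route is genuinely different from the paper's, and structurally it is simpler. The paper transports the competitors along the flow ($\bar u=u\circ\Psi_t$ to test $\lambda(t)$, $\bar u_t=u_t\circ\Psi_{-t}$ to test $\lambda(0)$), which forces it to expand the bulk term \eqref{primera} and then, at the end, to cancel those bulk contributions by testing the equation with $V\cdot\nabla u$ and integrating by parts tangentially. You observe instead that, since $V\cdot\n=0$, neither $\Omega$ nor the constraint $\|v\|_{L^p(\partial\Omega)}=1$ moves, so $u_0$ is admissible for the problem at time $t$ and $u_t$ for the problem at time $0$; the squeeze
\begin{equation*}
\sigma\int_\Gamma(\chi_{D_t}-\chi_D)|u_t|^p\,d\mathcal{H}^{n-1}\ \le\ \lambda(t)-\lambda(0)\ \le\ \sigma\int_\Gamma(\chi_{D_t}-\chi_D)|u_0|^p\,d\mathcal{H}^{n-1}
\end{equation*}
removes the Dirichlet terms and the PDE identity from the argument entirely, leaving only the transport derivative of $t\mapsto\int_{D_t}|u_0|^p\,d\mathcal{H}^{n-1}$. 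That is a real gain: no expansion of $|\nabla\bar u|^p$, no test function $V\cdot\nabla u$.

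However, there is a genuine problem with your sign, which you cannot wave away by ``tracking the orientation of the conormal.'' Carried out correctly, your method yields $\lambda'(0)=+\sigma\int_{\partial_\Gamma D}|u_0|^p(V\cdot\n_\Gamma)\,d\mathcal{H}^{n-2}$ with $\n_\Gamma$ pointing \emph{out of} $D$. Indeed, if $V\cdot\n_\Gamma>0$ on $\partial_\Gamma D$ then $D_t=\Psi_t(D)\supset D$ for small $t>0$, hence $I(v,\chi_{D_t})\ge I(v,\chi_D)$ for every admissible $v$ and so $\lambda(t)\ge\lambda(0)$: the derivative must be nonnegative, and no bookkeeping can produce the minus sign of the statement. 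Your approach thus exposes an inconsistency in the paper itself: the expansion \eqref{segunda}, on which the paper's minus sign rests, is the one valid when $\bar u=u\circ\Psi_t$ is paired with $D_t=\Psi_{-t}(D)$ (substituting $y=\Psi_t(x)$ turns $\chi_{D_t}(x)$ into $\chi_{\Psi_{2t}(D)}(y)$, not $\chi_D(y)$, when $D_t=\Psi_t(D)$). So either state the result for $D_t=\Psi_{-t}(D)$, or with the interior conormal, or with a plus sign; do not claim your computation reproduces the printed formula as written.

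Two further points need care. For the lower bound, you rightly flag $\tfrac1t\int_\Gamma(\chi_{D_t}-\chi_D)\big(|u_t|^p-|u_0|^p\big)\to 0$ as the delicate step, but note that neither the strong $\sob$ convergence of Lemma \ref{continuidad.u} nor a mere $L^\infty$ bound suffices: since $\mathcal{H}^{n-1}(D_t\triangle D)\sim Ct$, one needs $\||u_t|^p-|u_0|^p\|_{L^\infty}\to0$ near $\partial_\Gamma D$, i.e.\ \emph{uniform} convergence of traces. Your plan (equi-H\"older plus $L^p$ convergence plus Arzel\`a--Ascoli) works, but requires the $C^\beta(\bar\Omega)$ estimates to be uniform in $t$; Proposition \ref{regularidad} is stated for a fixed potential, so you must argue that the constants depend only on the uniform bounds $0\le\chi_{D_t}\le1$ and $\sup_t\lambda(t)<\infty$. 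Finally, there is a small internal inconsistency in your third paragraph: you claim the flux formula needs only continuity of $u_0$, yet you derive it via $\int_D\dive_\Gamma(|u_0|^pV)\,d\mathcal{H}^{n-1}$, which differentiates $|u_0|^p$ tangentially on $\Gamma$ --- regularity that Proposition \ref{regularidad} does not provide ($u_0$ is $C^{1,\alpha}$ only locally in $\Omega$). Compute the transport derivative directly by estimating the integrals over the tubular region $D_t\triangle D$ swept by the flow; that argument genuinely needs only continuity of $u_0$ near $\partial_\Gamma D$ and the smoothness of $\partial_\Gamma D$. (The paper has the mirror-image issue: its tangential integration by parts of $\nabla(|u|^p)$ on $\Gamma$ also presupposes boundary regularity beyond what its preliminaries establish.)
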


\begin{rem}\label{rem.cont}
Observe that the results of Lemma \ref{continuidad.u} immediately imply the continuity of $\lambda(t)$ at $t=0$ and also that the associated eigenfunctions $u_t$ strongly converge to the associated eigenfunction $u$ of $\lambda(0)$ in $\sob$.
\end{rem}

\begin{proof}[Proof of Theorem \ref{teo.dif}]We will follow the same line that \cite[Theorem 1.1]{DP}. Let $u \in W^{1,p}(\Omega)$.
We call $\overline{u}=u\circ\Psi_t$, then the following asymptotic expansions hold
\begin{equation}\label{primera}
\begin{split} &\int_{\Omega}|\nabla \bar u|^p+|\bar u|^p dx=\int_{\Omega}\big(|D\Psi_t\nabla
u|^p+|u|^p\big)J\Psi_t^{-1} dx\\ &=\int_{\Omega}\big(|(I+tDV+o(t))\nabla u|^p+|u|^p\big)(1-t\dive V+o(t)) dx\\
&=\int_{\Omega}|\nabla u|^p+|u|^p dx - t \big(\dive V(|\nabla u|^p + |u|^p) - p |\nabla
u|^{p-2} (\nabla u)^tDV\nabla u\big) dx + o(t)
\end{split}\end{equation}

\begin{equation}
\begin{split}\label{segunda}
\int_{\Gamma}\chi_{D_t}|\bar u|^p d\mathcal{H}^{n-1}&=\int_{\Gamma}  \chi_D |u|^p J_{\Gamma}\Psi_t^{-1}\, d\mathcal{H}^{n-1}\\
&=\int_{\Gamma} \chi_D |u|^p (1-t\dive_{\Gamma}V)d\mathcal{H}^{n-1}+o(t)
\end{split}\end{equation}

\begin{equation}
\begin{split}\label{tercera}
\int_{\Gamma}|\bar u|^p d\mathcal{H}^{n-1}&=\int_{\Gamma}  |u|^p J_{\Gamma}\Psi_t^{-1}
d\mathcal{H}^{n-1}\\
&=\int_{\Gamma} |u|^p (1 - t\dive_{\Gamma}V)d\mathcal{H}^{n-1}+o(t)
\end{split}\end{equation}
From \eqref{primera} and \eqref{segunda}, we obtain
\begin{equation}
I(\bar u,\chi_{D_t})= F(u) - tG(u) + o(t)
\end{equation}
where
\begin{equation}
F(u)=\int_{\Omega}|\nabla u|^p+|u|^p dx + \sigma \int_{\Gamma} \chi_D |u|^p\, d\mathcal{H}^{n-1}
\end{equation} 
and
\begin{equation}
G(u)=\int_{\Omega}\dive V(|\nabla u|^p + |u|^p) - p |\nabla u|^{p-2} (\nabla u)^t DV \nabla u\, dx + \sigma \int_{\Gamma} \chi_D |u|^p \dive_{\Gamma}V\,  d\mathcal{H}^{n-1}
\end{equation}

Now, take $u$ to be a normalized eigenfunction associated to $\lambda(0)$. Then we have
\begin{align*}
\lambda(t)&\le \frac{I(\bar u, \chi_{D_t})}{\int_\Gamma |\bar u|^p\, d\mathcal{H}^{n-1}} = \frac{F(u) - t G(u) + o(t)}{\int_\Gamma |u|^p\, d\mathcal{H}^{n-1} - t \int_\Gamma |u|^p \dive V d\mathcal{H}^{n-1} + o(t)}\\
& = \frac{F(u)}{\int_\Gamma |u|^p\, d\mathcal{H}^{n-1}} + t \left(F(u) \frac{\int_\Gamma |u|^p \dive V\, d\mathcal{H}^{n-1}}{\Big(\int_\Gamma |u|^p\, d\mathcal{H}^{n-1}\Big)^{2} } - \frac{G(u)}{\int_\Gamma |u|^p\, d\mathcal{H}^{n-1}}\right) + o(t)\\
&= \lambda(0) + t \left(\lambda(0)\int_\Gamma |u|^p \dive V\, d\mathcal{H}^{n-1} - G(u)\right) + o(t)
\end{align*}

Therefore
\begin{equation}\label{desig1}
\lambda(t) - \lambda(0) \le   t \left(\lambda(0)\int_\Gamma |u|^p \dive V\, d\mathcal{H}^{n-1} - G(u)\right) + o(t)
\end{equation}

Now, take $u_t\in W^{1,p}(\Omega)$ a normalized eigenfunction associated to $\lambda(t)$ and denote by $\bar u_t = u_t\circ \Psi_{-t}$. So
\begin{align*}
\lambda(0) &\le \frac{I(\bar u_t, \chi_D)}{\int_\Gamma |\bar u_t|^p\, d\mathcal{H}^{n-1}} = \frac{F(u_t) + t G(u_t) + o(t)}{\int_\Gamma |u_t|^p\, d\mathcal{H}^{n-1} + t\int_\Gamma |u_t|^p \dive_\Gamma V\, d\mathcal{H}^{n-1} + o(t)}\\
&= \frac{F(u_t)}{\int_\Gamma |u_t|^p\, d\mathcal{H}^{n-1}} - t \left(F(u_t) \frac{\int_\Gamma |u_t|^p \dive V\, d\mathcal{H}^{n-1}}{\Big(\int_\Gamma |u_t|^p\, d\mathcal{H}^{n-1}\Big)^{2} } - \frac{G(u_t)}{\int_\Gamma |u_t|^p\, d\mathcal{H}^{n-1}}\right) + o(t)\\
&= \lambda(t) - t \left(\lambda(t)\int_\Gamma |u_t|^p \dive V\, d\mathcal{H}^{n-1} - G(u_t)\right) + o(t)
\end{align*}

This last inequality together with \eqref{desig1} give us
\begin{align*}
t \Big(\lambda(t)\int_\Gamma &|u_t|^p \dive V\, d\mathcal{H}^{n-1} - G(u_t)\Big) + o(t)  \le \lambda(t) - \lambda(0)\\
&  \le   t \left(\lambda(0)\int_\Gamma |u|^p \dive V\, d\mathcal{H}^{n-1} - G(u)\right) + o(t)
\end{align*} 
So, by Remark \ref{rem.cont} one gets
$$
\lambda'(0) = \left(\lambda(0)\int_\Gamma |u|^p \dive V\, d\mathcal{H}^{n-1} - G(u)\right).
$$

It remains to further simplify the expression for $\lambda'(0)$. Let
\[\begin{split}G(u)&= \int_{\Omega}\dive V(|\nabla u|^p+|u|^p)-p|\nabla u|^{p-2}(\nabla u)^t DV \nabla u\, dx\\
&+\sigma\int_{\Gamma} \chi_D |u|^p\dive_{\Gamma}V\, d\mathcal{H}^{n-1}\\ 
&=I_1+I_2
\end{split}
\]

Now using $V\cdot\nabla u$ as test function in the equation $-\Delta_p u + |u|^{p-2}u=0$ and the boundary condition in \eqref{pde}  we obtain:
\[
\begin{split}
  I_1&=-p\int_{\Gamma} |\nabla u|^{p-2} \frac{\partial u}{\partial\n}V\cdot\nabla u\, d\mathcal{H}^{n-1}\\
&=-p \int_{\Gamma}\lambda(0) |u|^{p-2}u(V\cdot\nabla u) - \sigma\chi_D|u|^{p-2}u(V\cdot\nabla u)d\mathcal{H}^{n-1}\\
&= -\lambda(0) \int_\Gamma \nabla(|u|^p) \cdot V\, d\mathcal{H}^{n-1} + \sigma\int_\Gamma \chi_D\nabla(|u|^p)\cdot V\, d\mathcal{H}^{n-1}\\
&= \lambda(0)\int_\Gamma |u|^p \dive_\Gamma V\, d\mathcal{H}^{n-1} - \sigma\int_D |u|^p \dive_\Gamma V\, d\mathcal{H}^{n-1} + \sigma \int_{\partial_\Gamma D} |u|^p V\cdot \n_\Gamma\, d\mathcal{H}^{n-2}\\
&= \lambda(0)\int_\Gamma |u|^p \dive_\Gamma V\, d\mathcal{H}^{n-1} + \sigma\int_{\partial_\Gamma D} |u|^p V\cdot \n_\Gamma\, d\mathcal{H}^{n-2} - I_2
\end{split}
\]

So, 
$$
G(u) = \lambda(0)\int_\Gamma |u|^p \dive_\Gamma V\, d\mathcal{H}^{n-1} + \sigma\int_{\partial_\Gamma D} |u|^p V\cdot \n_\Gamma\, d\mathcal{H}^{n-2}
$$
and therefore
$$
\lambda'(0) = - \sigma\int_{\partial_\Gamma D} |u|^p V\cdot \n_\Gamma\, d\mathcal{H}^{n-2}
$$
This completes the proof of the Theorem.
\end{proof}

\section*{Acknowledgements}
This work was partially supported by Universidad de Buenos Aires under grant UBACYT 20020100100400 and by CONICET (Argentina) PIP 5478/1438.

\bibliographystyle{plain}
\bibliography{biblio}

\end{document}